\title{\bf \huge A Market Mechanism for Virtual Inertia}
\author{Bala Kameshwar Poolla$^1$, Saverio Bolognani$^1$, Li Na$^2$,  Florian D\"orfler$^1$%
\thanks{This material is supported by ETH start-up funds, the SNF Assistant Professor Energy Grant \#160573, NSF CAREER 1553407, NSF 1608509, and ARPA-E NODES.}%
\thanks{$^1$ B.K. Poolla, S. Bolognani, and F. D\"orfler are with the Automatic Control Laboratory at the Swiss Federal Institute of Technology (ETH) Z\"urich, Switzerland. Emails: {\tt \{bpoolla,bsaverio,dorfler\}@ethz.ch} }%
\thanks{$^2$ Li Na is with the School of Engineering and Applied Sciences, Harvard University.
Email: {\tt lina@seas.harvard.edu}.} }
\newtheoremstyle{bfnote}%
{}{}%
{\itshape}{}%
{\bfseries}{.}%
{ }%
{\thmname{#1}\thmnumber{ #2}\thmnote{ (#3)}}
\theoremstyle{bfnote}
\definecolor{m}{RGB}{0,56,167}
\newtheorem{theorem}{Theorem}
\newtheorem{lemma}{Lemma}
\newtheorem{remark}{Remark}
\DeclareSymbolFont{bbold}{U}{bbold}{m}{n}
\DeclareSymbolFontAlphabet{\mathbbold}{bbold}
\newcommand{\vectorones}[1][]{\mathds{1}_{#1}}
\newcommand{\vectorzeros}[1][]{\mathbbold{0}_{#1}}
\newcommand\oprocendsymbol{\hbox{$\square$}}
\newcommand\oprocend{\relax\ifmmode\else\unskip\hfill\fi\oprocendsymbol}
\DeclareFontFamily{OT1}{pzc}{}
\DeclareFontShape{OT1}{pzc}{m}{it}{<-> s * [1.10] pzcmi7t}{}
\DeclareMathAlphabet{\mathpzc}{OT1}{pzc}{m}{it}
\newlength\height 
\newlength\fwidth
\tikzset{
>=stealth',
  punktchain/.style={rectangle, rounded corners, 
    % fill=black!10,
    draw=black, very thick,text width=10em, 
    minimum height=3em, text centered, on chain},
  line/.style={draw, thick, <-},
  element/.style={tape,top color=white,bottom color=blue!50!black!60!,
    minimum width=8em,draw=blue!40!black!90, very thick,
    text width=10em, minimum height=3.5em, text centered, on chain},
  every join/.style={->, thick,shorten >=1pt},
  decoration={brace},
  tuborg/.style={decorate},
  tubnode/.style={midway, right=2pt},
}
\begin{document}
\maketitle
\thispagestyle{empty}
\pagestyle{empty}

% abstract
\begin{abstract}
One of the recognized principal issues brought along by the steadfast migration towards power electronic interfaced energy sources is the loss of rotational inertia. In conventional power systems, the inertia of the synchronous machines plays a crucial role in safeguarding against any drastic variations in frequency by acting as a buffer in the event of large and sudden power generation-demand imbalances. In future power electronic-based power systems, the same role can be played by strategically located virtual inertia devices. However, the question looms large as to how the system operators would procure and pay for these devices. In this article, we propose a market mechanism inspired by the ancillary service markets in power supply. We consider a linear network-reduced power system model along with a robust $\mathcal H_{2}$ performance metric penalizing the worst-case primary control effort. With a social welfare maximization problem for the system operator as a benchmark, we construct a market mechanism in which bids are invited from agents providing virtual inertia, who in turn are compensated via a Vickrey-Clarke-Groves payment rule. The resulting mechanism ensures truthful bidding to be the dominant bidding strategy and guarantees non-negative payoffs for the agents. A three-region case study is considered in simulations, and a comparison with a regulatory approach to the same problem is presented.
\end{abstract}

%%%%%%%%%%%%%%%%%%%%%%%%%%%%%%%%%%%%%%%%%%%%%%%%%%%%%%%%%%%%%%%%%%%%%%%%%%%%%%%%

\section{Introduction}
The significance of the role that the inertia of large rotating synchronous machines played in conventional power grids has come to the fore once again. There has been a concerted effort to phase out fossil fuel-based generation in favour of power electronic interfaced solar and wind-based renewable generation. However, such a transition has been accompanied by a rise in the instances of reported grid frequency violations \cite{Fingrid:16}. The low system inertia is ill-equipped to arrest large frequency swings caused by power imbalances. Among the numerous attempts carried out to address this issue of loss of inertia, a considerable focus is being directed at virtual inertia emulation. The key feature of such a scheme is mimicking the effect that the rotational inertia in traditional generation had on the grid. This is achieved through advanced control strategies and state-of-the-art power electronic devices \cite{SN-SD-MCC:13,HB-TI-YM:14,SD-JA:13,BK-BJ-YZ-VG-PD-BMH-BH:17,TV-DVH:16}. Previously, studies have been conducted \cite{BKP-SB-FD:17,AU-TB-GA:14,MP-JWSP-BF:17,TSB-TL-DJH:15,TB-FD:17,DG-SB-BKP-FD:17} to ascertain the impact of spatial arrangement of these virtual inertia devices. A performance metric accounting for frequency violations due to possible disturbances is adopted for this purpose. The location of these devices across the grid significantly alters the system response to disturbances, and the optimal placement is found to heavily depend on the expected disturbance profile of the fluctuations that the grid may experience \cite{BKP-SB-FD:17}. However, the associated economics of such an integration has not received adequate attention. The pricing and payment of synchronous inertia find a mention in \cite{EE-GV-AT-BK-MM-MO:14}. The Australian Energy Market Commission security market report \cite{SH-AEMO:17} recommends virtual inertia provision by transmission operators and introducing market-sourcing mechanisms to this end.

The objective of this study is to identify potential conflicts arising due to the interaction between multiple non-cooperative stakeholders, e.g., end-consumers, transmission system operators, renewable energy source providers, etc., and propose a plausible solution which safeguards individual interests. To this end, the problems of provisioning of virtual inertia units and the affiliated payment architecture are considered within the framework of ancillary-service markets coupled with auction theory. Pricing as an instrument to assist frequency regulation is not new, having being discussed in \cite{ARW-FCS:89}. Numerous works \cite{JZ-KB:03,JAT-AN-DSC-KP:13,TT-AZC-CL:12} have weighed-in on markets for regulation in power systems. The authors in \cite{WT-RJ:16,WT-RJ:15} consider variants of the 

problem of mechanism design for electricity markets, while \cite{WL-EB:17} discusses a game-theoretic characterization of market power in energy markets. Among the several market mechanisms, energy markets based on the celebrated Vickrey-Clarke-Groves (VCG) mechanism \cite{PM:00,WV:61,TG:73} have been previously explored in \cite{PGS-NW-MK:17,YX-SL:17,WT-RJ:17}. Such a mechanism maximizes a pre-defined global cost (e.g., social welfare of all stakeholders), while allowing individual agents to operate as per their private interests. {However, to the best of our knowledge, there is a lack of market design for virtual inertia provision to ensure the system dynamical performance.}

{\it Contributions:} The contributions of this paper are as follows. {We construct and propose a framework wherein the procurement and pricing of virtual inertia across the power network, coupled with their effect on system performance is reformulated as an $\mathcal{H}_2$ system norm optimization problem.} To this end, we present two approaches: the centralized problem formulation-- which is considered as an efficiency benchmark and a market-based approach. Both these constructions cater to long-term planning as well as the day-ahead or the real-time (e.g., 5 minute) {procurement} scenarios. Though the centralized approach seems appealing due to its efficiency, it is quite idealistic, as all participating agents need to report their cost curves to the system operator. This exposes us to the concern of inflated reporting of cost curves, which often leads to unjustified profits for the agents. To overcome these shortcomings, we adopt the VCG framework from {mechanism design as the foundation of our proposed market mechanism}. This mechanism incentivizes the agents to participate (by guaranteeing non-negative payoffs) and to report their true cost curves--as the optimal bidding strategies. We also recover the efficiency of our benchmark solution, as the two procurement problems coincide, maximizing a social welfare objective. Though VCG has been well-studied, our mechanism is rendered novel due to the nature of the problem we address. {Our min-max problem setup with infinite-dimensional coupling constraint (an LTI system) is a non-trivial example for applying VCG.} The social welfare objective considered in this paper is a linear combination of costs accounting for the {worst-case} post-fault dynamic response of the power network and the costs of procuring virtual inertia. Moreover, because of the special structure of the problem, the VCG mechanism applied to our problem is proved to be individual rational i.e., every market participant is guaranteed to receive non-negative net revenue. Our analytic results rest on a deliberately stylized power system model and an $\mathcal H_{2}$ frequency performance criterion, though they can be extended (numerically) to more detailed models and other performance indices as long as the underlying optimization problems can be solved efficiently. {Finally, as the value of inertia and procurement thereof, is location-dependent, we show that this non-homogeneity prevents the existence of a global price for virtual inertia. Moreover, even with a large number of agents, it becomes critical to have them reveal the true cost and they do not possess enough market power unless they collude.}

The remainder of this section introduces some notation. Section \ref{Section: Problem Formulation} provides a background on the problem by revisiting some previously established results on $\mathcal{H}_2$ norm based coherency metrics. Section \ref{Section: Decentralised} introduces the different costs associated with virtual inertia and provides a benchmark procurement problem. Such a problem admits a solution which maximizes a measure of social welfare. Section \ref{Section: Inertia Proc} presents an efficient market-based setup for inertia procurement. Section \ref{Section: Case study} presents a case study and simulations on a three-region network, where we compare the market-based and classic regulatory approaches. Finally, Section \ref{Section: Conclusions} concludes the paper.
\paragraph*{Notation}
We denote the $n$-dimensional vectors of all ones and zeros by $\vectorones[n]$ and $\vectorzeros[n]$.
Given an index set $\mathcal{I}$ with cardinality $|\mathcal{I}|$ and a real-valued array $\{{x_1},\dots, x_{|\mathcal{I}|}\}$, we denote by $x \in \mathbb{R}^{|\mathcal{I}|}$ the vector obtained by stacking the scalars $x_i$ and by $\textup{diag}\{{x_i}\}$ the associated diagonal matrix.
The vector $\mathbbold e_{i}$ is the $i$-th vector of the canonical basis for $\mathbb{R}^n$.

%%%%%%%%%%%%%%%%%%%%%%%%%%%%%%%%%%%%%%%%%%%%%%%%%%%%%%%%%%%%%%%%%%%%%%%%%%%%%%%%

\section{Problem Background}
\label{Section: Problem Formulation}

In this section, we present our power system model and integral-quadratic $\mathcal H_{2}$ performance criterion for frequency stability. We consider a power system model consisting of network-reduced swing equations accounting for the dynamics of synchronous machines and virtual inertia devices as in \cite{BKP-SB-FD:17,AU-TB-GA:14,MP-JWSP-BF:17,TSB-TL-DJH:15}. This coarse-grain model for the power system frequency dynamics together with the $\mathcal H_{2}$ metric allows us to derive our market mechanism in an analytic and insightful fashion. We remark that the results in this paper hold equally true when considering higher-order models of synchronous machines including governor dynamics, inertia-emulating power converters as in \cite{DG-SB-BKP-FD:17}, and also other performance metrics \cite{MP-JWSP-BF:17,TSB-TL-DJH:15,TB-FD:17} as long as the underlying optimization problems can be solved efficiently.%

\subsection{System Modeling}
\label{subsec:SysModel}
We consider a power network modeled by an underlying graph with nodes (buses) $\mathcal{V} =\{1, \dots, n\}$ and edges (transmission lines)  $\mathcal{E} \subseteq \mathcal{V} \times \mathcal{V}$. Next, a small-signal version of a network-reduced power system model \cite{PWS-MAP:98} is considered, where passive loads are modeled as constant impedances and eliminated via Kron reduction \cite{FD-FB:11d}, and the network is reduced to the sources $i \in \{1, $\dots$, n \}$ with linearized dynamics. The assumptions of identical unit voltage magnitudes, purely inductive lines, and a small signal approximation \cite{PWS-MAP:98} result in
\begin{equation}
\label{eq:basic}
m_{i} \ddot \theta_{i} + d_{i} \dot \theta_{i} = p_{\text{in}, i} - \sum_{\{i, j \}\in \mathcal{E}} b_{ij} (\theta_i -\theta_j),\quad \forall \,i,
\end{equation}
where ${p_{\text{in}, i}}$ refers to the net (i.e., electrical and mechanical) nodal power input, $b_{ij} \geq 0$ is the inverse of the reactance between nodes $\{i,j\} \in \mathcal E$. If bus $i$ only hosts a single synchronous machine, then \eqref{eq:basic} describes the electromechanical swing dynamics for the generator rotor angle $\theta_{i}$ \cite{PWS-MAP:98}, $m_{i}>0$ is the generator's rotational inertia, and  $d_{i}>0$ accounts for frequency damping or primary speed droop control (neglecting ramping limits). 

The renewable energy sources interfaced via power electronic inverters \cite{QCZ-TH:13} are {also} compatible with this setup. For such an interconnection, $\theta_{i}$ is the voltage phase angle, $d_{i}>0$ is the droop control coefficient, and $m_{i}>0$ either accounts for power measurement time constant \cite{JS-DG-JR-TS:13}, or arises from virtual inertia emulation through a dedicated controlled device~\cite{SN-SD-MCC:13,HB-TI-YM:14,SD-JA:13}, or is simply a control gain \cite{IAH-EMF:08}. Finally, the dynamics \eqref{eq:basic} may also arise from frequency-dependent or actively controlled frequency-responsive loads~\cite{PK:94}.
In general, each bus $i$ may host an ensemble of these devices, and the quantities $m_i$, $d_i$ reflect their aggregate behavior.

We wish to characterize the response of the interconnected system to disturbances in the nodal power injections, possibly representing faults, disconnection of loads or generators, intermittent demand, or fluctuating power generation from renewable sources as in \cite{BKP-SB-FD:17,DG-SB-BKP-FD:17,AU-TB-GA:14,MP-JWSP-BF:17,TSB-TL-DJH:15,TB-FD:17,UL-AM-MM-PW:17}. To do so, we consider the linear power system model \eqref{eq:basic} driven by the inputs ${p_{\text{in}, i}}$ parametrized as ${p_{\text{in}, i}}={{\pi}_i}^{\frac{1}{2}} \eta_i$.
In this parametrization, $\eta_i$ is a normalized disturbance, and the diagonal matrix $\Pi={\textup{diag}}\{{\pi}_{i}\}$ encodes the information about the strength of the disturbance at different buses, and therefore, describes also the location of the disturbance. We can obtain the state-space model for this linearized system as
\begin{equation*}
\begin{bmatrix}
\dot{\theta}\\
\dot{\omega}\\
\end{bmatrix}
=
\underbrace{
\begin{bmatrix}
{0} & I \\
-{M}^{-1}L & -{M}^{-1}{D}\\
\end{bmatrix}
}_{\text{\normalsize {\it=\,A}}}
\begin{bmatrix}
\theta\\
\omega\\
\end{bmatrix}
 +
\underbrace {
\begin{bmatrix}
0 \\
M^{-1} {\Pi}^{1/2}
\end{bmatrix}
}_{\text{\normalsize {\it=\,B}}}
\eta,
\label{eq: input}
\end{equation*}
where $M={\textup{diag}}\{m_{i}\}$ and $D ={\textup{diag}}\{d_{i}\}$ are the diagonal matrices of inertia and damping/droop coefficients, and $L \in \mathbb{R}^{n \times n}$ is the symmetric network susceptance matrix. The states $x=(\theta,\omega) \in \mathbb{R}^{2n}$ are the stacked vectors of angles and frequencies (deviations from the nominal values). Note that $A [\vectorones[n]^\top \; \vectorzeros[n]^\top]^\top = \vectorzeros[2n]$ and thus, $[\vectorones[n]^\top \; \vectorzeros[n]^\top]^\top$ is the right eigenvector corresponding to the eigenvalue zero.
To gauge the robustness of a power system to disturbances, we study generalized energy functions as metrics \cite{DG-SB-BKP-FD:17}, e.g., a quadratic function expressed as the time-integral 
\begin{equation}
	{\sum_i}\int_0^\infty y^i(t)^\top y^i(t) \,\text{d}t\, ={\sum_i}\int_0^\infty [{C \, x(t)}^i]^\top[{C \, x(t)}]^i\,\text{d}t\,,
	\label{eq:energy}
\end{equation}
where $y^i(t)= {C \, x(t)}^i$ is the corresponding output for an input {$u_i(t)$}. Note that by construction, the matrix $C$ is such that $C [\vectorones[n]^\top \; \vectorzeros[n]^\top]^\top = \vectorzeros[2n]$.{\footnote{As the system matrix is not Hurwitz, this choice ensures that the uncontrollable mode of $A$ is also unobservable from $C$.}}

The above performance metric can be interpreted as the energy amplification of the output $y$ (resp., its steady-state total variance) for impulsive disturbances $\eta$ (resp., unit variance white noise in a stochastic setting) and is commonly referred to as the squared $\mathcal{H}_2$ norm. The suitability of this metric to describe the stability and robustness of the system post fault is discussed in \cite{BKP-SB-FD:17,DG-SB-BKP-FD:17}.

\subsection{$\mathcal{H}_2$ norm calculation}
In this subsection, we recall a tractable approach for computing the energy metric presented in the previous subsection. 
\begin{lemma}{{\bf ($\mathcal{H}_2$ norm via observability Gramian)}}
\label{lemma: Observability Gramian}
Consider the state-space system $\mathrm{G}(A,B,C)$ defined above, with $A [\vectorones[n]^\top \; \vectorzeros[n]^\top]^\top = C [\vectorones[n]^\top \; \vectorzeros[n]^\top]^\top= \vectorzeros[2n]$. The $\mathcal{H}_2$ norm, ${\mathrm{Y}}(m,{\pi})$ is given by%
\begin{equation}
\label{eq:ObservGram}
{\|\mathrm{G}\|}_{2}^2={\mathrm{Y}}(m,{\pi})=\textup{Trace}(B^{\top}P B)\, ,
\end{equation}
where $Q={C}^{\top}{C}$, $P \in \mathbb{R}^{2n \times 2n}$ is the observability Gramian, uniquely defined by the following Lyapunov equation and an additional constraint:
\begin{align}
\label{eq: Lyap}
& {PA}+{A}^{\top}{P}+Q=0 \,,\\
\label{eq: Lyap constraint}
& P [\vectorones[n]^\top \; \vectorzeros[n]^\top]^\top = \vectorzeros[2n] \,.
\end{align}
\end{lemma}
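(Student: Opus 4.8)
The plan is to adapt the classical representation of the $\mathcal{H}_2$ norm via the observability Gramian to the present marginally stable setting. Write $v := [\vectorones[n]^\top\ \vectorzeros[n]^\top]^\top$, so that $A v = \vectorzeros[2n]$ and, by hypothesis, $C v = \vectorzeros[2n]$; consequently $e^{At} v = v$ and $C e^{At} v = \vectorzeros[2n]$ for all $t \geq 0$. Introduce the candidate $\widehat{P} := \int_0^\infty e^{A^\top t} Q\, e^{At}\,\mathrm{d}t$ and establish, in order: (i) the integral converges, so $\widehat{P}$ is well defined; (ii) $\|\mathrm{G}\|_2^2 = \textup{Trace}(B^\top \widehat{P} B)$; (iii) $\widehat{P}$ satisfies \eqref{eq: Lyap} and \eqref{eq: Lyap constraint}; and (iv) the system \eqref{eq: Lyap}--\eqref{eq: Lyap constraint} has a unique solution, whence $P = \widehat{P}$ and \eqref{eq:ObservGram} follows.

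For (i)--(ii), I would first invoke the standard fact that, for $M, D \succ 0$ and $L$ the Laplacian of a connected network, $A$ has a simple eigenvalue at the origin with right eigenvector $v$ and all remaining eigenvalues in the open left half-plane. Let $\mathcal{S} \subset \real^{2n}$ be the associated $A$-invariant complement, so that $\real^{2n} = \Span\{v\} \oplus \mathcal{S}$ and $e^{At}$ decays exponentially on $\mathcal{S}$. Decomposing an arbitrary state as $x = \alpha v + x_{\mathcal S}$ and using $C v = 0$ gives $C e^{At} x = C e^{At} x_{\mathcal S} \to 0$ exponentially, hence $t \mapsto \|C e^{At}\|^2$ is integrable and $\widehat{P}$ is well defined. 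Since the impulse response of $\mathrm{G}$ is $h(t) = C e^{At} B$, one obtains $\|\mathrm{G}\|_2^2 = \int_0^\infty \textup{Trace}\big(h(t)^\top h(t)\big)\,\mathrm{d}t = \textup{Trace}\big(B^\top \widehat{P} B\big)$ using $Q = C^\top C$ and linearity of the trace.

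For (iii), differentiating $t \mapsto e^{A^\top t} Q e^{At}$ and integrating over $[0,\infty)$, the boundary term at infinity vanishes because $(C e^{At})^\top (C e^{At}) \to 0$, yielding $A^\top \widehat{P} + \widehat{P} A + Q = 0$; and $\widehat{P} v = \int_0^\infty e^{A^\top t} C^\top (C v)\,\mathrm{d}t = \vectorzeros[2n]$ using $e^{At} v = v$ and $C v = 0$. Symmetry of $\widehat{P}$ is immediate from its integral form, so it is indeed the observability Gramian in the stated sense.

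The main obstacle is step (iv): because $A$ is not Hurwitz, \eqref{eq: Lyap} alone does not pin down $P$, and it is the normalization \eqref{eq: Lyap constraint} that restores uniqueness. I would argue that if $P_1, P_2$ both solve \eqref{eq: Lyap}--\eqref{eq: Lyap constraint}, then $\Delta := P_1 - P_2$ satisfies $A^\top \Delta + \Delta A = 0$ and $\Delta v = \vectorzeros[2n]$. The first identity makes $t \mapsto e^{A^\top t} \Delta e^{At}$ constant, hence equal to $\Delta$; letting $t \to \infty$ gives $\Delta = \Pi_0^\top \Delta\, \Pi_0$, where $\Pi_0 := \lim_{t\to\infty} e^{At} = v w^\top / (w^\top v)$ is the spectral projection onto $\Span\{v\}$ along $\mathcal{S}$, with $w$ the left eigenvector of $A$ for the eigenvalue $0$. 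But $\Delta \Pi_0 = (w^\top v)^{-1} (\Delta v) w^\top = 0$, so $\Delta = \Pi_0^\top \Delta \Pi_0 = 0$. Therefore $P = \widehat{P}$, and \eqref{eq:ObservGram} follows by combining this with (ii).
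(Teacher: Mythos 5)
Your proof is correct. Note that the paper itself does not prove this lemma in-text: it defers both existence and the uniqueness-under-the-constraint claim to the prior work \cite{BKP-SB-FD:17}, remarking only that the result extends to arbitrary positive semidefinite $Q$. Your argument is the natural self-contained derivation one would give: the decomposition $\real^{2n} = \Span\{v\} \oplus \mathcal{S}$ with $Cv=\vectorzeros[]$ handles the marginal stability, steps (i)--(iii) are the standard Gramian computation, and your step (iv) supplies the one genuinely non-routine ingredient --- uniqueness of the constrained solution --- via the spectral projection $\Pi_0 = vw^\top/(w^\top v)$, correctly observing that $\Delta = \Pi_0^\top \Delta \Pi_0$ together with $\Delta v = \vectorzeros[]$ forces $\Delta = 0$ (and without needing to assume $\Delta$ symmetric). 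The only facts you invoke without proof (simple eigenvalue at the origin of $A$, remaining spectrum in the open left half-plane for $M, D \succ 0$ and connected $L$, hence existence of $\lim_{t\to\infty} e^{At} = \Pi_0$) are standard for this class of second-order network dynamics and are implicitly assumed by the paper as well, so there is no gap.
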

It is known that the existence of the solution to \eqref{eq: Lyap} depends on the positive semi-definiteness of ${Q}$. Furthermore, from \cite{BKP-SB-FD:17}, we conclude that the solution $P$ is unique under the constraint \eqref{eq: Lyap constraint}. This result generalizes the uniqueness of $P$, as in \cite{BKP-SB-FD:17}, to any positive semidefinite matrix ${Q}$. Lemma \ref{lemma: Observability Gramian} allows us to analyze the generalized energy function \eqref{eq:energy} via an elegant $\mathcal{H}_2$ norm optimization problem. 

\subsection{Virtual inertia and post-fault behavior}
The minimization of ${\mathrm{Y}}(m,{\pi})$ through suitable choice of inertia coefficients $m_i$, attenuates the energy amplifications due to disturbances. {We stress here that as the $\mathcal{H}_2$ norm is a function of both the spatial distribution of the inertia $m$ in the grid and the location and strengths of the disturbances ${\pi}$  (through the input matrix $B$ and  through the cost function $C$), the effect of the location also impacts the market mechanism.} 

{\textbullet\ \bf Generalised energy function} -- The performance metric \eqref{eq:energy} computed in \eqref{eq:ObservGram}-\eqref{eq: Lyap constraint} as ${\mathrm{Y}}(m,{\pi})$ is generally non-convex in the inertia variable $m$ (cf. \cite{BKP-SB-FD:17}). {This is troublesome as it significantly complicates the analysis and study of the consequences of loss in synchronous inertia in large-scale power systems.}

A meaningful metric which singles out the effect of inertia on the post-fault frequency response is {desirable in order to quantify and mitigate any obstacles arising due to the loss in synchronous inertia}. To tackle these two concerns simultaneously, we consider the {output $y(t)=D^{1/2}\omega(t)$} as the cost in \eqref{eq:energy}. This yields
\begin{equation}
	{\mathrm{Y}_D} (m,{\pi})=\sum_i \mathlarger{\int_0^\infty}  [\omega(t)^\top D\, \omega(t)]^i\, {\text{d}t}\,,
	\label{eq:primaryeffort}
\end{equation}
which penalizes the frequency excursions at each node via the primary control effort $d \omega$ in \eqref{eq:basic} for disturbances at all inputs $u_i(t)$. This cost function is also justified from a system operator's viewpoint as significant resources are employed to contain frequency violations through droop control to restore frequency stability, which is effectively captured here. We note that a penalty on primary control effort is also (and especially) meaningful when considering more detailed higher-order power system models \cite{DG-SB-BKP-FD:17} where the optimal control strategy trades-off already existing droop control and additional costly inertia emulation. 

{\textbullet\ \bf Primary Control Effort} -- The performance metric $\mathrm{Y}_D (m,{\pi})$ in \eqref{eq:primaryeffort}, further, as a special case, admits a closed-form expression of the squared $\mathcal{H}_2$ norm via Lemma \ref{lemma: Observability Gramian} which is convex in the inertia variable $m$, i.e.,
\begin{equation}
\label{eq:primary effort trace}
{\|\mathrm{G}\|}_{2}^2={\mathrm{Y}_D}(m,{\pi})=\sum\nolimits_{i}\dfrac{{\pi}_i}{m_i},
\end{equation}
where ${\pi}_i$, $m_i$ are the disturbance strength, inertia at node $i$ respectively. {Note that the performance metric is linear in the disturbance strengths $\pi_i$.} We refer the reader to \cite{BKP-SB-FD:17} for a detailed proof.

As discussed in Section~\ref{subsec:SysModel}, ${\Pi}$ encodes the specific location of a disturbance besides the strength. However, as grid specifications necessitate performance guarantees against all possible contingencies, it is appropriate to consider a robust performance metric accounting for the {\it worst-case} disturbance. We can incorporate such {worst-case} requirements by exploiting the linearity of the performance metric \eqref{eq:primary effort trace} in ${\pi}$. We denote by ${\mathcal{P}}$, the set collecting all the possibly occurring disturbances (available from historical data, or forecasts). As a representative, we consider the following normalized set for the rest of the paper
\begin{equation}
{\mathcal{P}}:{\pi}\in \mathbb{R}_+^n,\, \vectorones[n]^{\sf T} {\pi} \leq {\pi}_\text{tot}.
\label{eq:distcons2}
\end{equation}
{This set $\mathcal{P}$ is a proxy for a set of bounded energy disturbances.} Let ${{\Gamma}}(m)$ be the {worst-case} performance, over the set of normalized disturbances in ${\mathcal{P}}$, i.e.,
\begin{equation}
{{\Gamma}}(m)=\underset{{\pi}\in {\mathcal{P}}} {\textup{max}} \quad{\mathrm{Y}_D}(m,{\pi}).
\label{eq:robust Q}
\end{equation}
As ${\mathrm{Y}_D}(m, {\pi})$ is linear in ${\pi}$, the maximization problem \eqref{eq:robust Q} can be reformulated as a tractable minimization problem, which for the primary control effort \eqref{eq:primary effort trace} is given by
\begin{subequations}%
\label{eq:robmin}%
\begin{empheq}[left = \text{${{\Gamma}}(m)=$}\empheqlbrace]{align}%
\underset{\rho\geq 0} {\textup{min}} & \quad {\pi}_\text{tot}\, {\rho} \\
\textup{subject to} 
& \quad m_i^{-1}-\rho \leq 0\,,\,\, \forall \,i\label{eq:robmin2}\,,
\end{empheq}%
\end{subequations}
where $\rho$ is the dual multiplier associated with the inequality budget constraint \eqref{eq:distcons2} on the disturbances.

{{\textbullet\ \bf Convex Upper-bounds for general cost functions} -- As we observed in \eqref{eq:primary effort trace}, the objective is convex in the inertia variable $m$, but this is not true for any general output matrix $C$. However, it is possible to obtain an upper-bound on the generalised energy function which is convex in $m$, as shown below in Remark~\ref{rem:ConvexBound}.

\begin{remark}[Convex upper-bound]\label{rem: convex bound}
\label{rem:ConvexBound}
\textup{
For generic performance metrics \eqref{eq:energy}, with an output matrix partitioned as $C=\textup{blkdiag}(C_1, \textup{diag}(C_2))$, a convex (in $m$) upper bound is \cite{BKP-SB-FD:17} 
\begin{equation*}
\label{eq:Bounds}
  {{\mathrm{Y}} (m, {\pi})}
\leq \overline{{\pi}}\cdot \underbrace{\frac{1}{2\underline{d}} \left\{\textup{Trace}({L}^\dagger\, Q_1) + \sum_{i=1}^n \frac{{(Q_2)}_{i}}{m_i}\right\}}_\text{\normalsize$=\mathcal{U}_b (m)$},
\end{equation*}
 where,
 $\overline{{\pi}} = {\textup{max}}_i{\{{\pi}_i}\}$, $\underline{d} = {\textup{min}}_i\{d_i\}$, $\,{L}^\dagger$ is the pseudo-inverse of the network Laplacian $L$, and the matrix $Q=C^\top C=\textup{blkdiag}(Q_1, Q_2)$. For the disturbance set $\mathcal{P}$ in \eqref{eq:distcons2}, the worst-case upper-bound $\mathcal{U}_b (m)$ on the performance $\mathrm{Y} (m,{\pi})$ in terms of the inertia variable $m$ is given by
\begin{equation*}
\mathcal{U}(m)=\underset{{\pi} \in {\mathcal{P}}} {\textup{max}} \quad \overline{{\pi}}\,\,\mathcal{U}_b(m)={\pi}_\textup{tot}\, \,\mathcal{U}_b (m).
\label{eq:rob upper}
\end{equation*}
Alternatively, this bound can be expressed as
\begin{empheq}[left = \text{${\mathcal{U}(m)}=$}\empheqlbrace]{equation*}%
\begin{split}
\underset{\rho\geq 0} {\textup{min}} & \quad {\pi}_\textup{tot}\, {\rho} \\
\textup{subject to} 
& \quad \mathcal{U}_b(m)-\rho\leq 0\,.
\end{split}
\end{empheq}%
}
\end{remark}

We note the striking similarity and parallels in the structures of $\mathcal{U}(m)$ and \eqref{eq:robmin}. In the subsequent sections we shall focus on the primary control effort for our analyses. However, we remark that this choice is not particularly restrictive, as investigating a convex upper bound (as in Remark \ref{rem: convex bound}) for generic cost functions \eqref{eq:energy} leads to a comparable analyses and analogous results-- due to convexity in the inertia variable $m$.}

%%%%%%%%%%%%%%%%%%%%%%%%%%%%%%%%%%%%%%%%%%%%%%%%%%%%%%%%%%%%%%%%%%%%%%%%%%%%%%%%

\section{Centralized Planning Problem for Inertia}
\label{Section: Decentralised}
In this section, we motivate the inertia planning problem from the system operator's context. Consider a setting, wherein the system operator acquires virtual inertia units from multiple agents, with the provision of more than one agent per bus (or node). A possible regulatory solution involves sourcing units proportional to their capacity from each agent. This may, however, be uneconomical as virtual inertia units may be dissimilar based on the underlying technology and may not have identical costs. In order to make a more informed choice, it is, therefore, necessary to consider the costs entailed for virtual inertia provision by these units. 

The grid operator needs safeguarding against the {worst-case} limits \cite{ENTSOE2016} on $\|\dot\omega(t)\|_{\infty}$. Thus, a significant virtual inertia support in terms of large power injections for short durations of time post-fault is necessary to offset large frequency violations and arrest the rate of change of frequency $\dot \omega$. We recall from \eqref{eq:basic} that the power injected by virtual inertia devices during the initial post-fault transient is given by $p_\text{inj}\propto \|m \,\dot\omega(t)\|_{\infty}$. As such a service of providing high peak-power is not usually provided by the existing converters, it requires a dedicated oversized power converter interface. 

A previous study \cite{HT-CJ-AG:16} considered the problem of economic evaluation along with the design aspects of virtual inertia providing devices. The cost for provision of peak power was revealed as the overriding factor in determining the overall cost for such devices. Furthermore, it was proposed the normalised cost of physical inertia as a trading unit, rendering all the monetary costs proportional to $m$. In addition, agents would also incur some costs towards the maintenance of these resources over their lifetime. Together, all these contribute to the net cost of providing virtual inertia as a service. We consider $c(m)$, a convex non-decreasing function to account for the lumped cost of $m$.

The system operator, now equipped with the true cost information, determines the virtual inertia allocation by optimizing system performance in an economical manner. Mathematically, this translates to solving the following optimization problem
\begin{subequations}
\label{eq:Cent}
\begin{align}
\underset{\mu} {\textup{min}} \quad & \gamma\,{\Gamma}(m(\mu)) +\sum_{k\in \mathcal{A}}\,c_k(\mu_k) \label{eq:Cent_a} \\
\textup{subject to} \quad & 0\leq \mu_k \leq{\overline{\mu_k}}  \label{eq:Cent_c}, \quad \forall\,k,
\end{align}
\end{subequations}
where, ${\Gamma}(m)$ is the {worst-case} performance metric as in \eqref{eq:robmin}, $\mathcal{A}=\mathcal{A}_1\cup \ldots \cup \mathcal{A}_n$, $\mathcal{A}_i$ is the total number of agents at node $i$, is the total number of agents, $\mu_k$ is the virtual inertia obtained from agent $k$, and $\gamma>0$ trades-off the coherency metric and the economic costs. {We further note that $m(\mu)$ can be broken down as 
\begin{equation}
m_i=m_i^0+\sum_{k\in\mathcal{A}_i} \mu_k \label{eq:Cent_d},\quad \forall \,i,
\end{equation}
where, $m^0$ is the stacked vector of existing/residual (after the synchronous machines are replaced) inertia at the nodes and $\mu_k$ is the procured virtual inertia from each agent $k$. We model the capacity constraints of each agent via \eqref{eq:Cent_c}, where $\overline{\mu_k}$ is the maximum inertia procurement capacity from agent $k$. This set up is schematically represented in Figure~\ref{fig:setup}\footnote{For the centralised problem which is considered a benchmark, the agents only provide the costs $c(\mu_k)$ and not the bids $b(\mu_k)$.}. In the following, we use $\mu\in \mathcal{M}$ as a shorthand for the constraints encoded in \eqref{eq:Cent_c}.
}

%%%%%%%%%%%%%%%%%
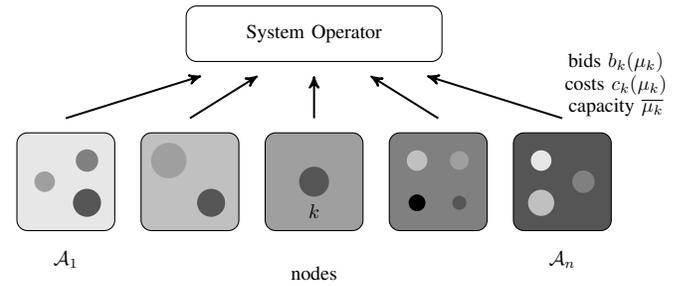
\begin{figure}[htbp]
\centering
\definecolor{lightgrayT}{RGB}{231,231,231}
\definecolor{silverT}{RGB}{192,192,192}
\definecolor{darkgrayT}{RGB}{159,159,159}
\definecolor{grayT}{RGB}{128,128,128}
\definecolor{dimgrayT}{RGB}{85,85,85}

\begin{tikzpicture}[scale=0.75, transform shape]
\tikzstyle{verylargebox} = [draw, rounded corners, text width=4.3cm, text height=0.75cm]

\tikzstyle{windbox} = [draw, rounded corners, text width=1.5cm, text height=1.5cm]
\tikzstyle{hydroboxl} = [draw, rounded corners, text width=1.5cm, text height=1.5cm]
\tikzstyle{gasbox} = [draw, rounded corners, text width=1.5cm, text height=1.5cm]
\tikzstyle{coalbox} = [draw, rounded corners, text width=1.5cm, text height=1.5cm]
\tikzstyle{oilbox} = [draw, rounded corners, text width=1.5cm, text height=1.5cm]

\coordinate (acord) at (0,0);
\coordinate[right=2.2cm of acord] (bcord);
\coordinate[right=2.2cm of bcord] (ccord);
\coordinate[right=2.2cm of ccord] (dcord);
\coordinate[right=2.2cm of dcord] (ecord);
\coordinate[above=3cm of acord] (fa);
\coordinate[right=4.4cm of fa] (fcord);

\node[windbox, fill=lightgrayT, anchor=south] (a) at (acord){};
\node[hydroboxl, fill=silverT, anchor=south] (b) at (bcord){};
\node[gasbox, fill=darkgrayT, anchor=south] (c) at (ccord){};
\node[coalbox, fill=grayT, anchor=south]  (d) at (dcord){};
\node[oilbox, fill=dimgrayT, anchor=south]  (e) at (ecord){};

\node[verylargebox, anchor=south] (f) at (fcord){};

\draw node[fill=dimgrayT, circle, scale=1.5] at (0.375, 0.5){};
\draw node[fill=grayT, circle, scale=1.2] at (0.375, 1.25){};
\draw node[fill=darkgrayT,circle, scale=1.1] at (-0.375, 0.875){};

\draw node[fill=dimgrayT, circle, scale=1.5] at (2.575,0.5){};
\draw node[fill=darkgrayT, circle, scale=1.9] at (1.825, 1.25){};

\draw node[fill=dimgrayT, circle, scale=1.6] at (4.4, 0.875){};

\draw node[fill=dimgrayT,circle, scale=0.75] at (6.975,0.5){};
\draw node[fill=silverT, circle, scale=1.1] at (6.225, 1.25){};
\draw node[fill=darkgrayT, circle, scale=1] at (6.975, 1.25){};
\draw node[fill,circle, scale=0.9] at (6.225, 0.5){};

\draw node[fill=grayT, circle, scale=1.2] at (9.175, 0.875){};
\draw node[fill=silverT, circle, scale=1.4] at (8.425, 0.5){};
\draw node[fill=lightgrayT, circle, scale=1.1] at (8.425, 1.25){};

\draw [line width=0.75pt, ->](0, 2)--(2.4, 2.75);
\draw [line width=0.75pt, ->](2.2, 2)--(3.4, 2.75);
\draw [line width=0.75pt, ->](4.4, 2)--(4.4, 2.75);
\draw [line width=0.75pt, ->](6.6, 2)--(5.4, 2.75);
\draw [line width=0.75pt, ->](8.8, 2)--(6.4, 2.75);

\node[] at (4.4,3.5) {{System Operator}};
\node[] at (4.4,-0.75) {{nodes}};
\node[] at (9.75,3.0) {{bids $b_k(\mu_k)$}};
\node[] at (9.75,2.6) {{costs $c_k(\mu_k)$}};
\node[] at (9.75,2.2) {{capacity $\overline{\mu_k}$}};
\node[] at (0,-0.5) {{$\mathcal{A}_1$}};
\node[] at (8.8,-0.5) {{$\mathcal{A}_n$}};
\node[] at(4.4, 0.35) {{$k$}};

\end{tikzpicture}
\caption{A schematic representing the operator-agent interaction for the centralised and market-based mechanism design and highlighting the richness of the problem under consideration. We depict different nodes $\mathcal{A}_i$ through rectangular boxes and their non-homogeneity in the value of inertia at these nodes through different colors. The number of circles in each rectangle indicate the number of agents, the color their dissimilarities and the radii of the circles, their inertia capacities.}
\label{fig:setup}
\end{figure}

The solution to \eqref{eq:Cent} is the allocation that maximizes the social welfare, by virtue of minimizing the {worst-case} coherency metric \eqref{eq:robmin} and the cost of virtual inertia. Such an allocation is inherently efficient and constitutes the benchmark solution to the virtual inertia procurement problem. The centralized problem \eqref{eq:Cent} and the variations thereof have been studied in \cite{BKP-SB-FD:17,DG-SB-BKP-FD:17,MP-JWSP-BF:17}. However, in liberalized energy markets, the cost functions $c_k(\mu_k)$ are private information of each agent and therefore the system operator can not directly solve \eqref{eq:Cent}. We consider such a framework in the next section. 

\begin{remark}[Planning and day-ahead scenarios]
\label{rem:planning}
\textup{
Both the centralized inertia procurement problem \eqref{eq:Cent} and the market-based approach (that we propose in Section \ref{Section: Inertia Proc}) can be considered on two operational time-scales. If analyzed in a long-term {planning framework}, virtual inertia is predominantly employed to counteract the ill-effects of integrating renewables, coupled with a planned phasing-out of synchronous machines. As the costs are biased towards high peak-power devices, a market is needed to  incentivise investment and installation of such devices. On the other hand, in a day-ahead scenario, virtual inertia is deployed for instantaneous frequency support in case of time-varying inertia profiles \cite{AU-TB-GA:14}, incidental re-dispatches, and anticipated fluctuations. This market can be employed at different time-scales, including real-time (e.g. 5 mins).}
\end{remark}

%%%%%%%%%%%%%%%%%%%%%%%%%%%%%%%%%%%%%%%%%%%%%%%%%%%%%%%%%%%%%%%%%%%%%%%%%%%%%%%%

\section{A Market Mechanism for Inertia}
\label{Section: Inertia Proc}
 
In this section, we consider a market-based procurement approach, inspired by ancillary service markets. In such a setting, the power system regulator devises a market mechanism-- where the system operator invites bids for virtual inertia in lieu of a fair compensation to the agents providing the service. The additional burden is eventually borne by the end consumers. 
 The objectives that govern the design of such a mechanism are: safeguard against the agents benefiting from reporting inflated bids; ensure that the resulting payments incentivize agents to participate in the auction and assure non-negative returns; {and} guarantee the efficiency of the benchmark centralized planning problem discussed in Section \ref{Section: Decentralised}.

\subsection{Overview and Preliminaries: Mechanism Design}
\label{Subsection: Prelim}
We consider a mechanism design approach to design an auction for the system operator to procure virtual inertia at the buses. {Specifically, each agent with its private cost function $c_k(\cdot)$ is one agent. The system operator\\
 {\it (1)} collects a bid $b_k(\cdot)$ from each agent $k$,\\ 
 {\it (2)} determines an allocation/procurement $\mu_k(b)$, and \\
 {\it (3)} sets the payment $p_k(b)$ for each agent $k$. \\
 Here, $b=(b_1,\ldots, b_n)$ are the collected bids, and we alternatively use the notation $(b_k, b_{-k})$ to denote $b$. Each agent $k$ aims to choose its bidding curve $b_k(\mu_k)$ strategically, in order to maximize individual utility, which is a measure of profit. The utility function $u_k$, is computed as the difference between the payment and the investment costs, i.e., the utility evaluated when $\mu_k$ inertia units are provided by agent $k$ at a bid $b_k$ is given by
\begin{equation}
\label{eq:Utility}
u_k(b_k, b_{-k}) = {p_k(b_k, b_{-k})} - c_k(\mu_k(b_k, b_{-k})),
\end{equation}
and depends also on all other agents' bids.
 
 Different mechanism design approaches differ on the types of bids, allocation rule, and payment rule. Due to the Revelation Principle, in this paper, we only consider direct mechanism design, meaning that the bids $b_k$ are the same type of cost functions $c_k$.} In the following, we formalize a few of these notions via game-theoretic definitions \cite{TB-GJO:98} before proceeding to the main results.

{\textbullet\ \it Nash equilibrium}: In a non-cooperative game theoretic setting, a Nash equilibrium is a set of strategies for the players, such that a unilateral deviation by any participant (strategies of other players unchanged) does not result in any gain in the payoff. For the game under consideration, this translates to a set of bids $(b^\star_1, \ldots, b^\star_n)$ being a Nash equilibrium, if for all agents $k$, the utility $u_k(b_k, b_{-k})$ satisfies
$$
u_k(b^\star_k, b^\star_{-k}) \geq u_k(b_k, b^\star_{-k}),\quad \forall \, b_k,
$$
where the bids $b_k$, $b_{-k}$ are the bids of agent $k$ and other agents excluding $k$, respectively. 

{\textbullet\ \it Dominant strategy}: A strategy for any player is said to be dominant, if irrespective of the strategies of other players, the payoff is larger than any other of its strategies. This is a stronger result than a Nash equilibrium strategy, as it is alien to the strategies of other players.\\
For the game considered here, this translates to a bid $b^\star_k$ being dominant for agent $k$, if the utility is maximized, i.e.,
$$u_k(b^\star_k, b_{-k})\geq u_k(b_k, b_{-k}), \quad \forall \,b_{-k}, b_k.$$

{\textbullet\ \it Incentive-compatibility}: A mechanism is said to be {incentive-compatible} if every player maximizes its payoff by bidding true costs $b_k=c_k$, i.e., 
$$u_k(c_k, b_{-k})\geq u_k(b_k, b_{-k}), \quad \forall \,b_{-k}, b_k.$$

Observe that an incentive-compatible mechanism results in a dominant strategy (thus, also a Nash equilibrium) with {\em truthful} bids $b^\star_{k}=c_{k}$.

%%%%%%%%%%%%%%%%%%%%%%%%%%%

{\textbullet\ {\it Mechanism Structure}: From a mechanism design viewpoint, the key characteristics of the proposed market mechanism are as follows. The ``type space" corresponds to the set of convex, non-decreasing cost curves $c_k(\mu_k)$ which is the private information of each agent $k$. The ``message space" is the set of non-decreasing, convex bidding curves $b_k(\mu_k)$ such that $b_k(0)=0$ for each agent $k$. The mechanism thus designed is a direct mechanism from the Revelation Principle.} 
%%%%%%%%%%%%%%
\subsection{VCG Market Mechanism}
\label{Subsection: VCG}
The auction theory literature is rich with mechanisms proposed to incentivize the participation of agents while prohibiting exorbitant bidding. A particularly popular one, which respects both the requirements, is the Vickrey-Clarke-Groves (VCG) mechanism \cite{PM:00,WV:61,TG:73}. In this paper, we assume that the regulator adopts this mechanism as a possible approach.

The system operator {sets up} an auction to procure virtual inertia for buses $i \in \mathcal{V}$. We denote by $k\in \mathcal{A}$, the different agents who participate in the auction {(see Figure~\ref{fig:setup})}. Each agent simultaneously submits non-decreasing and convex bidding curves $b_k(\mu_k)$ together with the margins on the maximum $\overline{\mu_k}$ virtual inertia that can be provided.

The system operator instead of \eqref{eq:Cent}, determines the allocation vector $\mu^\text{VCG}$, for individual bids $b_k(\mu_k)$ for each agent $k$,
by solving the optimization problem 
\begin{equation}
\mu^\text{VCG}(b) := \textup{arg} \underset{\mu \in \mathcal{M}} {\textup{min}} \quad\underbrace{\gamma\,{\Gamma}(m(\mu)) + \sum_{k\in \mathcal{A}}\,b_k(\mu_k)}_{\text{\normalsize $:=\mathcal{B}(\mu, b)$}}\,.
\label{eq:so_opt1}
\end{equation}

{We recall from {\eqref{eq:robmin}} that ${\Gamma}(m(\mu))$ is convex in $m$ (therefore, also convex in $\mu$). The objective $\mathcal{B}(\mu, b)$ being convex in the $\mu$'s, admits an optimal allocation which is unique and concurrently minimizes the {worst-case} coherency metric \eqref{eq:robmin}, while choosing the most economical bids.}

The agents $k \in \mathcal{A}$ receive a commensurate compensation for the virtual inertia $\mu_k$ they provide, according to the \emph{VCG payment rule} \eqref{eq:VCGpay}. The knowledge of the underlying mechanism is assumed to be known a priori to both the operator and agents. The payment made to agent $k$ translates to its {\it externality}, i.e., the difference of the system costs when agent $k$ is absent from the auction and the cost when agent $k$'s contribution is excluded.

Let $\mu^\text{VCG-$k$}$ be the vector of optimal allocations of agents, when agent $k$ abstains from the auction. This corresponds to the solution of the same optimization problem \eqref{eq:so_opt1}, with the same bids, however, with the constraint set $\mathcal{M}_{-k}:=\mathcal{M}\cap(\mu_k=0)$, accounting for the absence of $k$, i.e.,
\begin{equation}
\mu^\text{VCG-$k$}(b):= \textup{arg}\underset{\mu \in \mathcal{M}_{-k}} {\textup{min}}\quad \mathcal{B}(\mu, b).\label{eq:so_opt-i}
\end{equation}
The payment $p_k^\text{VCG}$ to agent $k$ is computed as
\begin{equation}
p_k^\text{VCG}=\Bigg\{\mathcal{B}(\mu^\text{VCG-$k$}, b)\Bigg\} -\Bigg\{\mathcal{B}(\mu^\text{VCG}, b) -b_k(\mu_k^\text{VCG})\Bigg\},
\label{eq:VCGpay}
\end{equation}
where we recall that $\mu^\text{VCG}$ and $\mu^\text{VCG-$k$}$ are defined in \eqref{eq:so_opt1} and \eqref{eq:so_opt-i} respectively.

The market mechanism is completely defined by the map that dictates the allocations $\mu_k$ and the payments $p_k$, given the bids $b_k$ collected from the agents. Figure~\ref{fig:mechanism} illustrates the operator-agent mechanism, where we make the allocations $\mu(b)$ and the payments $p(b)$ explicit functions of the bids.

\begin{figure}[h]
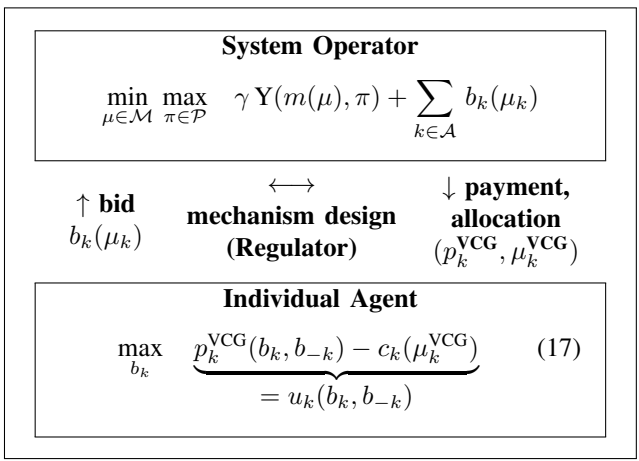

 \smallskip
\centering
\begin{minipage}{0.95\columnwidth}
\begin{framed}
\vspace{-.01in}
\begin{center}
\begin{minipage}{.98\columnwidth}
\begin{framed}
\begin{center}
\vspace{-.1in}
{\bf {System Operator}}
\begin{equation*}%
\label{eq:SysOp}
\underset{\mu \in \mathcal{M}}{\textup{min}}\,\,\underset{{\pi} \in {\mathcal{P}}}{\textup{max}}\quad \gamma\,{\text{Y}}(m(\mu), {\pi}) +\sum_{k\in \mathcal{A}}\,b_k(\mu_k)
\end{equation*}
\vspace{-.2in}
\end{center}
\end{framed}
\begin{minipage}{.25\textwidth}
\begin{center}
$\uparrow$ {\bf bid\\ $b_k(\mu_k)$}
\end{center}
\end{minipage}%
\begin{minipage}{.40\textwidth}
\begin{center}
$\longleftrightarrow$\\
{\bf mechanism design\\(Regulator)}
\end{center}
\end{minipage}%
\begin{minipage}{.35\textwidth}
\begin{center}
$\downarrow$ {\bf payment, allocation $(p_k^\text{VCG}, \mu_k^\text{VCG})$}
\end{center}
\end{minipage}

\begin{framed}
\begin{center}
\vspace{-.18in}
{\bf{Individual Agent}}
\begin{equation}
\label{eq:IndAg}
\underset{b_k}{\textup{max}}\quad \underbrace{{p_k^\text{VCG}(b_k, b_{-k})}-c_k(\mu_k^\text{VCG})}_{\text{\normalsize $=u_k(b_k, b_{-k})$}}
\end{equation}
\vspace{-.18in}
\end{center}
\end{framed}
\end{minipage}
\end{center}
\vspace{-.1in}
\end{framed}
\end{minipage}
\smallskip
\caption{Schematic representing the operator-agent mechanism through a regulator proposed mechanism design.}
\label{fig:mechanism}
\end{figure}

For the market setup in Figure~\ref{fig:mechanism}, we investigate suitable bidding strategies for the agents to maximize their utilities, in the theorem below. A mechanism additionally ensuring non-negative utilities is said to be {\it individually rational}. 
\begin{theorem}{{\bf(VCG-based inertia auction)}}
\label{theorem:dominant}
Consider the market setup described in Section~\ref{Subsection: VCG}, defined by the allocation function $\mu^\text{VCG}$ in terms of the bids \eqref{eq:so_opt1} and the payment function $p^\text{VCG}$ in \eqref{eq:VCGpay}. Then for every agent $k$,
\begin{enumerate}[label=(\roman*), leftmargin=0.05cm, itemindent=0.5cm]
\setlength{\itemsep}{2pt}
\item the VCG mechanism is incentive-compatible,
\item the utilities of the agents and payments made to the agents are non-negative,
\item and the allocation $\mu^\text{VCG}(c)$ is efficient, i.e., solves the social welfare maximization problem \eqref{eq:Cent}.
\end{enumerate}
\end{theorem}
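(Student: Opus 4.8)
The plan is to show that, once the worst-case metric $\Gamma(m(\mu))$ is treated as a black box, the three claims reduce to the classical Vickrey--Clarke--Groves argument; the only care needed is to check that this black box is well behaved. By Lemma~\ref{lemma: Observability Gramian} and the robust reformulation \eqref{eq:robmin}, $\Gamma(m(\mu))$ is a finite-valued convex function of $\mu$ on $\mathcal{M}$ (finiteness holds as soon as $m(\mu)>0$, e.g.\ when the residual inertia $m^0$ is positive), so $\mathcal{B}(\mu,b)$ is convex in $\mu$, the allocation problems \eqref{eq:so_opt1}--\eqref{eq:so_opt-i} are well posed, and the minimizers $\mu^\text{VCG}(b)$ and $\mu^\text{VCG-$k$}(b)$ exist. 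From this point on the infinite-dimensional LTI coupling constraint plays no further role in the mechanism-design argument.

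\emph{(i) Incentive-compatibility.} Fix an agent $k$ and the bids $b_{-k}$. I would first observe that $\mathcal{B}\big(\mu^\text{VCG-$k$}(b),b\big)$ is independent of $b_k$: on $\mathcal{M}_{-k}=\mathcal{M}\cap\{\mu_k=0\}$ the term $b_k(\mu_k)=b_k(0)=0$, so neither the minimizer nor the optimal value over $\mathcal{M}_{-k}$ involves $b_k$. Substituting \eqref{eq:VCGpay} into \eqref{eq:Utility} and using $\mathcal{B}(\mu^\text{VCG},b)-b_k(\mu_k^\text{VCG})=\gamma\,\Gamma(m(\mu^\text{VCG}))+\sum_{j\neq k}b_j(\mu_j^\text{VCG})$ gives
\begin{equation*}
u_k(b_k,b_{-k})=\mathcal{B}(\mu^\text{VCG-$k$},b)-\Big[\gamma\,\Gamma(m(\mu^\text{VCG}))+\sum_{j\neq k}b_j(\mu_j^\text{VCG})+c_k(\mu_k^\text{VCG})\Big],
\end{equation*}
and the bracketed quantity equals $\mathcal{B}\big(\mu^\text{VCG}(b),(c_k,b_{-k})\big)$ --- the operator's objective with $k$'s bid replaced by $k$'s true cost, evaluated at the realized allocation. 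Since the first term does not depend on $b_k$, maximizing $u_k$ over $b_k$ amounts to minimizing $\mathcal{B}\big(\mu^\text{VCG}(b_k,b_{-k}),(c_k,b_{-k})\big)$ over $b_k$. For every bid $b_k$ one has $\mu^\text{VCG}(b_k,b_{-k})\in\mathcal{M}$, hence $\mathcal{B}\big(\mu^\text{VCG}(b_k,b_{-k}),(c_k,b_{-k})\big)\geq\min_{\mu\in\mathcal{M}}\mathcal{B}(\mu,(c_k,b_{-k}))$, and this lower bound is attained at $b_k=c_k$ by definition of the allocation rule \eqref{eq:so_opt1}. Therefore $u_k(c_k,b_{-k})\geq u_k(b_k,b_{-k})$ for all $b_k,b_{-k}$, i.e.\ the mechanism is incentive-compatible, and truthful bidding is a dominant strategy.

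\emph{(ii) Non-negativity.} From \eqref{eq:VCGpay}, $p_k^\text{VCG}=\mathcal{B}(\mu^\text{VCG-$k$},b)-\mathcal{B}(\mu^\text{VCG},b)+b_k(\mu_k^\text{VCG})$. Since $\mathcal{M}_{-k}\subseteq\mathcal{M}$, minimizing over the smaller set cannot decrease the value, so $\mathcal{B}(\mu^\text{VCG-$k$},b)\geq\mathcal{B}(\mu^\text{VCG},b)$; and $b_k(\mu_k^\text{VCG})\geq b_k(0)=0$ since $b_k$ is non-decreasing; hence $p_k^\text{VCG}\geq0$. For the utilities, I would evaluate the displayed identity at the dominant-strategy bid $b_k=c_k$: the bracketed term becomes $\mathcal{B}\big(\mu^\text{VCG},(c_k,b_{-k})\big)$, so $u_k(c_k,b_{-k})=\mathcal{B}\big(\mu^\text{VCG-$k$},(c_k,b_{-k})\big)-\mathcal{B}\big(\mu^\text{VCG},(c_k,b_{-k})\big)\geq0$ by the same nesting argument --- i.e.\ the mechanism is individually rational.

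\emph{(iii) Efficiency, and the main obstacle.} With every agent bidding truthfully, $b=c$, the allocation rule \eqref{eq:so_opt1} coincides verbatim with the centralized welfare problem \eqref{eq:Cent}, both minimizing $\gamma\,\Gamma(m(\mu))+\sum_{k\in\mathcal{A}}c_k(\mu_k)$ over $\mu\in\mathcal{M}$; therefore $\mu^\text{VCG}(c)$ solves \eqref{eq:Cent}, and by (i) truthful bidding is indeed the outcome, so the market implements the efficient allocation. Since the proof is a template instance of VCG, the difficulty is not conceptual but lies in the bookkeeping: the delicate points are (a) verifying that $\mathcal{B}(\mu^\text{VCG-$k$},b)$ is independent of $b_k$, which is exactly what the normalization $b_k(0)=0$ in the message space buys, and (b) ensuring the optimization problems are well posed so that the substitutions above are legitimate, which rests on the convexity of $\Gamma(m(\cdot))$ established via Lemma~\ref{lemma: Observability Gramian} and \eqref{eq:robmin}. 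The genuinely new ingredient relative to textbook VCG --- the infinite-dimensional LTI coupling constraint --- is benign for this argument once $\Gamma(m(\mu))$ is known to be a well-behaved convex function; it matters only for the tractability of solving \eqref{eq:so_opt1}--\eqref{eq:so_opt-i}.
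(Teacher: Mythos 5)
Your proof is correct and follows essentially the same route as the paper's: the same decomposition of the utility into the $b_k$-independent externality term $\mathcal{B}(\mu^{\text{VCG-}k},b)$ minus $\mathcal{B}(\mu^{\text{VCG}}(b),(c_k,b_{-k}))$, the same feasibility/optimality argument showing $b_k=c_k$ attains the minimum of the latter, the same nesting $\mathcal{M}_{-k}\subseteq\mathcal{M}$ for non-negativity, and the same identification of \eqref{eq:so_opt1} with \eqref{eq:Cent} under truthful bids. Your write-up is, if anything, slightly tighter in making the key inequality $\mathcal{B}(\mu^{\text{VCG}}(b_k,b_{-k}),(c_k,b_{-k}))\geq\min_{\mu\in\mathcal{M}}\mathcal{B}(\mu,(c_k,b_{-k}))$ explicit and in proving $p_k^{\text{VCG}}\geq 0$ directly for arbitrary bids rather than only at the truthful equilibrium.
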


\begin{proof}
Please refer to the Appendix.
\end{proof}
Theorem \ref{theorem:dominant} establishes that under the framework proposed in Figure \ref{fig:mechanism}, the agents maximize their respective utilities by bidding their true cost functions. Such a characteristic is desirable in order to prevent agents from accumulating profits by reporting inflated bids. We wish to highlight that this theorem adapts and extends the original VCG results to more complex {min-max} objectives with dynamic coupling (an $\mathcal{H}_2$ norm setting) to arrive at a payment scheme for virtual inertia.

{In addition, the proof also helps us to reflect on the assumptions made to arrive at the claimed result. We highlight that primarily, we relied on the convexity of the performance metric \eqref{eq:primary effort trace} (Section II. C) and the convexity of the bids (Section IV. A) in the inertia variable $m (\mu)$. On inspection, the proof relies on the ability to compute a global optimum of the combined cost function $\mathcal{B}(\mu, b)$. When considering large-scale non-linear power system models, computing the local minima for such a large dimensional problem is fairly easy, however it is very hard to compute the global minimum (even if it exists). As the VCG-based payments rely on recomputing the objective without the agents' contribution, it can be visualised that the numerically computed optimizer could converge to any of the multiple local minima (depending on the solver, initialisation, etc.) and therefore could have varied objective values at these minimizers. Under such scenarios, non-negativity of the utility function cannot be assumed or proved. Therefore, these assumptions can be relaxed if global minimum can be computed explicitly or numerically.}

\begin{remark}[Alternate problem formulation]
\label{rem:AlternateProb}
\textup{
The planning problem \eqref{eq:Cent} can alternatively be posed as-- minimizing costs subject to meeting certain performance guarantees. In such a framework, the optimization problem translates to
\begin{subequations}
\label{eq:Cent_alt}
\begin{align}
\underset{\mu \in \mathcal{M}} {\textup{min}} &\quad \sum_{k\in \mathcal{A}}\,c_k(\mu_k)\\
\textup{subject to} & \quad {\Gamma}(m(\mu)) \leq \overline{{\Gamma}}\label{eq:Cent_alt a}
\end{align}
\end{subequations}
where, $\overline{{\Gamma}}$ is a pre-specified performance guarantee which the system operator desires to meet. Observe that upon dualizing the constraint \eqref{eq:Cent_alt a} with a Lagrange multiplier $\gamma$, we obtain
\begin{equation}
\label{eq:Cent_alt2}
\underset{\gamma\geq0} {\textup{max}} \,\,\left\{  -\gamma\,\overline{{\Gamma}}+ \underset{\mu\in \mathcal{M}} {\textup{min}} \,\,\sum_{k\in \mathcal{A}}\,c_k(\mu_k)+\gamma\, {\Gamma}(m(\mu))\right\}.
\end{equation}
The inner minimization problem corresponds to the original problem formulation  \eqref{eq:Cent}. The outer maximization problem, on the other hand, is a tractable scalar optimization problem that can be solved via standard iterative procedures. As the optimal bidding strategy has been shown to be independent of $\gamma$ in Theorem~\ref{theorem:dominant}, the same bids can be used in all iterations of the outer maximization problem.}\end{remark}

\begin{remark}[Generality of the result]
\label{rem:Generality}
\textup{
Although we discuss a particular instance based on a coarse-grain swing equation model and an $\mathcal{H}_2$ performance metric here, Theorem~\ref{theorem:dominant} also holds true for more general system models such as the non-linear high-fidelity South East Australian system \cite{BKP-DG-FD:19}, as long as the $\textup{arg}\, \,\textup{min}$ of the expressions \eqref{eq:so_opt1}, \eqref{eq:so_opt-i} can be evaluated {efficiently}.}
\end{remark}

%%%%%%%%%%%%%%%%%%%%%%%%%%%%%%%%%%%%%%%%%%%%%%%%%%%%%%%%%%%%%%%%%%%%%%%%%%%%%%%%

\section{Numerical Case Study}
\label{Section: Case study}
In this section, we present a few illustrations which support the preceding discussions on the centralized planning problem and the market-based auction mechanism. A 12-bus case study depicted in Figure~\ref{fig: sim} is considered. The system parameters are based on a modified two-region system from \cite[Example 12.6]{PK:94} with an additional third region, as introduced in \cite{TSB-TL-DJH:15}. We assume that each node is enabled to receive virtual inertia contributions from a number of dissimilar agents.
\subsection{Simulation setup}
We examine the alternate formulation presented in Remark~\ref{rem:AlternateProb} for the case study in Figure~\ref{fig: sim}. This setup enables us to impose guarantees on the {worst-case} performance via \eqref{eq:Cent_alt a}. Further, we note that the constraint \eqref{eq:Cent_alt a} requires solving a maximization problem over the disturbance set \eqref{eq:distcons2}. This can however be circumvented by rewriting the maximization inequality as a set of inequality constraints. We express \eqref{eq:Cent_alt a} from \eqref{eq:primary effort trace}, \eqref{eq:robust Q} as
\begin{subequations}
\label{eq:sim}
\begin{align}
\underset{{\pi}} {\textup{max}} &\quad \left\{ \sum_{i} \frac{{\pi}_i}{m_i} \right\}\leq \overline{{\Gamma}}\label{eq:sim a}\\
\textup{subject to} 
& \quad{{\pi}}_{i} \geq 0,\quad \forall\, i \label{eq:sim b}\\
& \quad \vectorones[n]^{\sf T} {\pi} \leq {\pi}_\text{tot} \label{eq:sim c}.
\end{align}
\end{subequations}
\begin{figure}[th]
\centering
\includegraphics[width=3.4in]{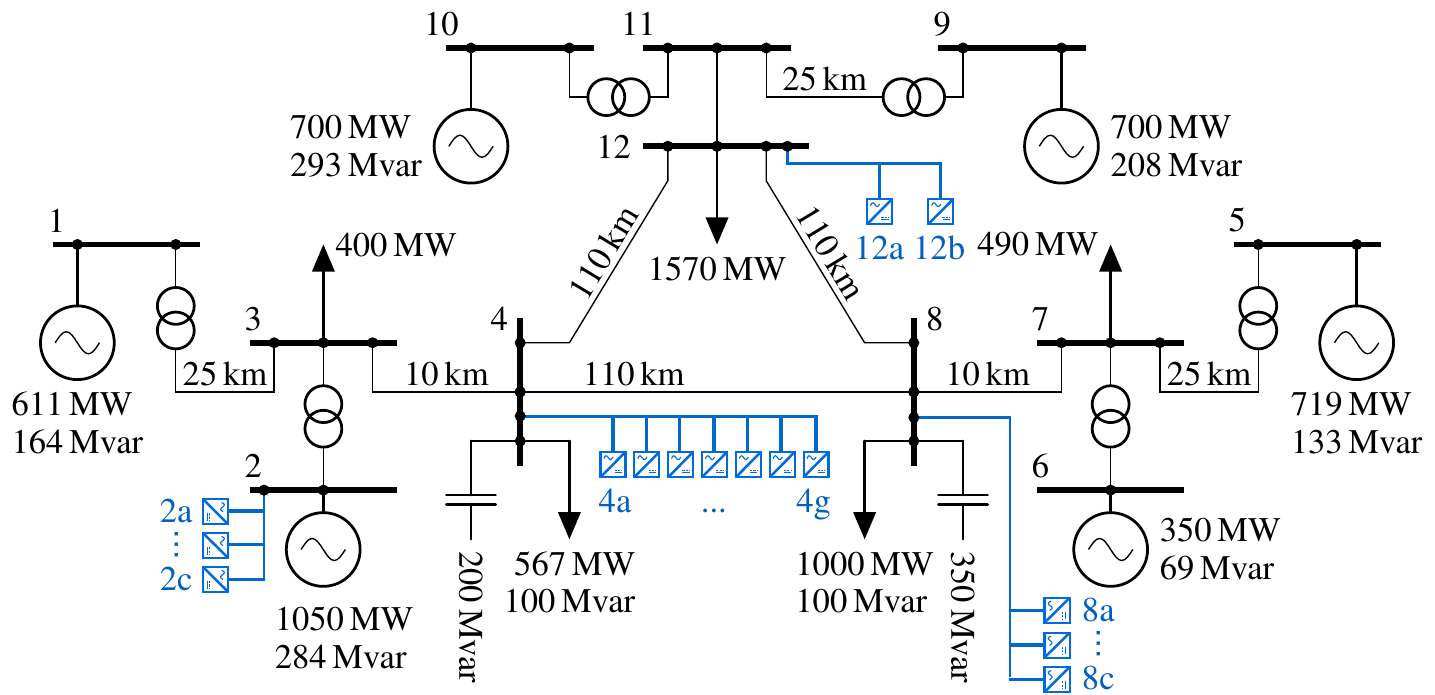}
\caption{A 12 bus three-region test case, grid parameters as in \cite{DG-SB-BKP-FD:17}.}
\label{fig: sim}
\end{figure}
The linearity of \eqref{eq:sim a} in ${\pi}$ dictates that the maximum can be obtained by evaluating the objective at the vertices of the polytope given by \eqref{eq:sim b}--\eqref{eq:sim c}, i.e., \eqref{eq:sim} is equivalent to the set of inequality constraints given by
\begin{equation}
\label{eq:simcons}
{{\pi}_\text{tot}} \leq \overline{{\Gamma}}\cdot {m_i}, \quad \forall \, i.
\end{equation}
The resulting set of constraints from \eqref{eq:simcons} is linear in the decision variables $\mu$ and can be directly incorporated in the optimization problem \eqref{eq:Cent_alt2}. For the purpose of simulations, we choose a non-trivial performance guarantee $\overline{{\Gamma}}=0.29$. This choice of $\overline{{\Gamma}}$ requires an additional virtual inertia allocation at the buses labeled $2, 4, 8, 12$ only, as the other buses by virtue of {$m^0$} already have sufficient inertia capability based on \eqref{eq:simcons} to meet this constraint specification.

\subsection{Simulation results}
The design of the market facilitates virtual inertia at each bus to be contributed by a number of agents. In our case study, we consider $3, 7, 3, 2$ agents connected to buses numbered $2, 4, 8, 12$ respectively, and label these as $(2\text{a},\ldots, 2\text{c})$, $(4\text{a}, \ldots, 4\text{g})$, $(8\text{a},\ldots, 8\text{c})$, $(12\text{a},\, 12\text{b})$. Further, we assume that the maximum inertia that each agent can contribute is given by the stacked vector $\overline{\mu}=[20,40,60,20,40,20,40,20,40,20,20,40,60,20,40]$. Note that the dissimilarity in the maximum inertia support can arise due to nature of the {underlying technology of the power electronic interface, the size of the energy device, maximum power rating} among others. We recall from Section~\ref{Subsection: VCG} that the bids and costs are expressed as non-decreasing convex curves. Here, we consider a special subset of linear functions, i.e., $c_k(\mu_k)=c_k\,\mu_k$ for the costs. As the costs incurred for provision of virtual inertia depend on the operating technology \cite{HT-CJ-AG:16}, we divide the costs into three monetary categories \footnote{The costs indicate the trend and do not necessarily reflect the exact cost of procurement.} for a qualitative analysis: {low}-- ($c=1$), {medium}-- ($c=5$), {high}-- ($c=10$). The true cost vector for the agents is chosen as $c^\text{sim}=[1,5,1,5,5,1,5,10,5,5,5,5,10,1,5]$, where the $c^\text{sim}_k$ element corresponds to the cost of procuring a unit inertia from agent $k$. We summarize our observations below.

\begin{figure}[ht]
\begin{subfigure}
\centering
% This file was created by matlab2tikz.
%
%The latest updates can be retrieved from
%  http://www.mathworks.com/matlabcentral/fileexchange/22022-matlab2tikz-matlab2tikz
%where you can also make suggestions and rate matlab2tikz.
%
\definecolor{mycolor1}{rgb}{0.49412,0.49412,0.49412}%
\definecolor{mycolor2}{HTML}{D59B2D}%
\definecolor{mycolor3}{HTML}{253F5B}%
\begin{tikzpicture}

\begin{axis}[%
width=2.2in,
height=1.4in,
at={(0.183in,1.945in)},
scale only axis,
bar shift auto,
xmin=0.511111111111111,
xmax=9.48888888888889,
xtick={1,2,3,4,5,6,7,8,9},
xticklabels={$1$, $2$, $4$, $5$, $6$, $8$, $9$, $10$, $12$},
xlabel style={font=\color{black}, yshift=1ex},
xlabel={\small{node}},
ymin=0,
ymax=50,
ytick={  0,  10,  20, 30, 40, 50},
ylabel style={font=\color{black},yshift=-3ex},
ylabel={\small{inertia}},
ymajorgrids,
axis background/.style={fill=white},
yticklabel style = {font=\footnotesize,xshift=0ex},
xticklabel style = {font=\footnotesize,yshift=0ex},
no markers,
every axis plot/.append style={ultra thin},
legend entries={$\underline{m}$, $m^\star$, $m_\text{uni}$},
legend style={legend cell align=left, align=left, draw=none, font=\small},
legend style={at={(0.45,0.97)}, anchor=north east}
]
%\addlegendimage{no markers, mycolor1, very thick}
%\addlegendimage{no markers, mycolor2, very thick}
%\addlegendimage{no markers, mycolor3, very thick}

\addplot[ybar, bar width=4, fill=mycolor1, draw=black, area legend] table[row sep=crcr] {%
1	37.24225668\\
2	12.41408556\\
3	7.219268219\\
4	35.38014385\\
5	35.38014385\\
6	12.73239545\\
7	37.24225668\\
8	37.24225668\\
9	19.98986085\\
};
\addlegendentry{$\, \underline{m}$}

\addplot[ybar, bar width=4, fill=mycolor2, draw=black, area legend] table[row sep=crcr] {%
1	37.242\\
2	34.483\\
3	34.483\\
4	35.38\\
5	35.38\\
6	34.483\\
7	37.242\\
8	37.242\\
9	34.483\\
};
\addlegendentry{$\, m^\star$}

\end{axis}

\begin{axis}[%
width=0.35in,
height=1.4in,
at={(2.883in,1.945in)},
scale only axis,
bar shift auto,
xmin=0.4,
xmax=1.8,
xtick={\empty},
ymin=0,
ymax=1.5,
ytick={0, 0.3,  0.6, 0.9, 1.2, 1.5},
yticklabels={$0$, $0.3$,  $0.6$, $0.9$, $1.2$, $1.5$},
ylabel style={font=\color{black},yshift=-3ex},
ylabel={\small{\text{worst-case} performance}},
axis background/.style={fill=white},
ymajorgrids,
yticklabel style = {font=\footnotesize,xshift=0ex},
xticklabel style = {font=\footnotesize,yshift=0ex},
no markers,
every axis plot/.append style={ultra thin},
legend style={legend cell align=left, align=left, draw=black}
]
\addplot[ybar, bar width=4, fill=mycolor1, draw=black, area legend] table[row sep=crcr] {%
1	1.2842\\
};

\addplot[ybar, bar width=4, fill=mycolor2, draw=black, area legend] table[row sep=crcr] {%
1.2	0.29\\
};

\end{axis}
\end{tikzpicture}%
\caption{Inertia profiles for the grid with a  primary control effort penalty and the associated {worst-case} performance.}
\label{fig: alloc} 
\end{subfigure}
\bigskip
\begin{subfigure}{}
% This file was created by matlab2tikz.
%
%The latest updates can be retrieved from
%  http://www.mathworks.com/matlabcentral/fileexchange/22022-matlab2tikz-matlab2tikz
%where you can also make suggestions and rate matlab2tikz.
%
\definecolor{mycolor1}{rgb}{0.49412,0.49412,0.49412}%
\definecolor{mycolor2}{HTML}{D59B2D}%
\definecolor{mycolor3}{HTML}{253F5B}%
\begin{tikzpicture}

\begin{axis}[%
width=3.0in,
height=1.4in,
at={(0.183in,4.95in)},
scale only axis,
bar shift auto,
xmin=0.5,
xmax=15.5,
xtick={1,2,3,4,5,6,7,8,9,10,11,12,13,14,15},
xticklabels={${2a}$,${2b}$,${2c}$,${4a}$,${4b}$,${4c}$,${4d}$,${4e}$,${4f}$,${4g}$,${8a}$,${8b}$,${8c}$,${12a}$,${12b}$},
xlabel style={font=\color{black}, yshift=1ex},
xlabel={\small{agents}},
ymin=0,
ymax=22,
ymajorgrids,
ylabel style={font=\color{black},yshift=-3ex},
ylabel={\small{inertia}},
axis background/.style={fill=white},
every axis plot/.append style={ultra thin},
yticklabel style = {font=\footnotesize,xshift=0ex},
xticklabel style = {font=\footnotesize,yshift=0ex},
legend style={legend cell align=left, align=left, draw=none, font=\small},
legend style={at={(0.8,0.97)},anchor=north east}
]
\addplot[ybar, bar width=3, fill=mycolor1, draw=black, area legend] table[row sep=crcr] {%
1	3.678\\
2	7.356\\
3	11.034\\
4	2.72635\\
5	5.4527\\
6	2.72635\\
7	5.4527\\
8	2.72635\\
9	5.4527\\
10	2.72635\\
11	3.625\\
12	7.25\\
13	10.875\\
14	4.782657\\
15	9.710243\\
};
\addplot[forget plot, color=white!15!black] table[row sep=crcr] {%
0.5	0\\
15.5	0\\
};
\addlegendentry{regulatory}

\addplot[ybar, bar width=3, fill=mycolor2, draw=black, area legend] table[row sep=crcr] {%
1	11.0343299243426\\
2	5.00009415801344e-09\\
3	11.0343287113236\\
4	1.45269215542908\\
5	1.45269154190225\\
6	19.9999999950002\\
7	1.4526934270501\\
8	3.9988693464494e-09\\
9	1.45268981506806\\
10	1.45269168624119\\
11	10.8751792487404\\
12	10.8751793719509\\
13	3.99888865669911e-09\\
14	14.4928586356894\\
15	4.99945381426038e-09\\
};
\addplot[forget plot, color=white!15!black] table[row sep=crcr] {%
0.5	0\\
15.5	0\\
};
\addlegendentry{market-based}

\addplot[ybar, bar width=3, fill=mycolor3, draw=black, area legend] table[row sep=crcr] {%
1	11.0343299243426\\
2	5.00009415801344e-09\\
3	11.0343287113236\\
4	1.45269215542908\\
5	1.45269154190225\\
6	19.9999999950002\\
7	1.4526934270501\\
8	3.9988693464494e-09\\
9	1.45268981506806\\
10	1.45269168624119\\
11	10.8751792487404\\
12	10.8751793719509\\
13	3.99888865669911e-09\\
14	14.4928586356894\\
15	4.99945381426038e-09\\
};
\addplot[forget plot, color=white!15!black] table[row sep=crcr] {%
0.5	0\\
15.5	0\\
};
\addlegendentry{centralized}

\end{axis}

\begin{axis}[%
width=3.0in,
height=0.3in,
at={(0.183in,4.2in)},
scale only axis,
bar shift auto,
xmin=0,
xmax=450,
xlabel style={font=\color{black}, yshift=1ex},
xlabel={\small{total cost (\$)}},
ymin=0.3,
ymax=2.3,
xmajorgrids,
ytick={\empty},
axis background/.style={fill=white},
yticklabel style = {font=\footnotesize,xshift=0ex},
every axis plot/.append style={ultra thin},
xticklabel style = {font=\footnotesize,yshift=0ex},
legend style={legend cell align=left, align=left, draw=black}
]
\addplot[xbar, bar width=3, fill=mycolor1, draw=black, area legend] table[row sep=crcr] {%
406.994722	2.4\\
};

\addplot[xbar, bar width=3, fill=mycolor2, draw=black, area legend] table[row sep=crcr] {%
201.630603628241	1.3\\
};

\addplot[xbar, bar width=3, fill=mycolor3, draw=black, area legend] table[row sep=crcr] {%
201.630603628241	0.2\\
};

\end{axis}
\end{tikzpicture}%
\caption{Virtual inertia contributions from agents and the total monetary cost incurred under regulatory, market-based setups.}
\label{fig: reg}
\end{subfigure}
\bigskip
\begin{subfigure}{}
% This file was created by matlab2tikz.
%
%The latest updates can be retrieved from
%  http://www.mathworks.com/matlabcentral/fileexchange/22022-matlab2tikz-matlab2tikz
%where you can also make suggestions and rate matlab2tikz.
%
\definecolor{mycolor1}{rgb}{0.49412,0.49412,0.49412}%
\definecolor{mycolor2}{HTML}{D59B2D}%
\definecolor{mycolor3}{HTML}{253F5B}%
\begin{tikzpicture}

\begin{axis}[%
width=3.0in,
height=1.4in,
at={(0.183in,0.45in)},
scale only axis,
bar shift auto,
xmin=0.514285714285714,
xmax=15.4857142857143,
xtick={1,2,3,4,5,6,7,8,9,10,11,12,13,14,15},
ytick={0, 1.5, 3, 4.5, 6, 7.5},
yticklabels={$0$, $1.5$, $3$, $4.5$, $6$, $7.5$},
xticklabels={${2a}$,${2b}$,${2c}$,${4a}$,${4b}$,${4c}$,${4d}$,${4e}$,${4f}$,${4g}$,${8a}$,${8b}$,${8c}$,${12a}$,${12b}$},
xlabel style={font=\color{white!15!black},yshift=1ex},
xlabel={\small{agents}},
ymin=0,
ymax=8,
ymajorgrids,
ylabel style={font=\color{black}, yshift=-3ex},
ylabel={\small{monetary units}},
axis background/.style={fill=white},
every axis plot/.append style={ultra thin},
yticklabel style = {font=\footnotesize,xshift=0ex},
xticklabel style = {font=\footnotesize,yshift=0ex},
legend style={legend cell align=left, align=left, draw=none, font=\small},
legend style={at={(0.55,0.97)},anchor=north east}
]
\addplot[ybar, bar width=4, fill=mycolor1, draw=black, area legend] table[row sep=crcr] {%
1	1.74989913655928\\
2	0\\
3	1.74989921899653\\
4	4.99999999999853\\
5	4.9999999999985\\
6	5.00000079200164\\
7	4.99999999999844\\
8	0\\
9	4.99999999999875\\
10	4.9999999999986\\
11	5.80475095785683\\
12	5.80475094875582\\
13	0\\
14	4.99999999448151\\
15	0\\
};
\addplot[forget plot, color=white!15!black] table[row sep=crcr] {%
0.514285714285714	0\\
15.4857142857143	0\\
};
\addlegendentry{Marginal payment}

\addplot[ybar, bar width=4, fill=mycolor2, draw=black, area legend] table[row sep=crcr] {%
1	1\\
2	0\\
3	1\\
4	5\\
5	5\\
6	1\\
7	5\\
8	0\\
9	5\\
10	5\\
11	5\\
12	5\\
13	0\\
14	1\\
15	0\\
};
\addplot[forget plot, color=white!15!black] table[row sep=crcr] {%
0.514285714285714	0\\
15.4857142857143	0\\
};
\addlegendentry{Marginal cost}

\end{axis}
\end{tikzpicture}%
\caption{Average payment and average cost profiles for agents participating in the market-based auction for procuring virtual inertia.}
\label{fig: pcu}
\end{subfigure}
\end{figure}

\begin{enumerate}[label=(\roman*), leftmargin=0.05cm, itemindent=0.5cm]
\setlength{\itemsep}{2pt}
\item In Figure~\ref{fig: alloc}, we plot the inertia profiles and the corresponding {worst-case} performance for the three-region case study. The robust optimal allocation problem yields a {\it valley-filling} profile as in \cite{BKP-SB-FD:17} which renders all buses identical with respect to the expected disturbance.
\item In Figure~\ref{fig: reg}, we compare the allocations and total procurement costs for three mechanisms: a possible {regulatory allocation} (where inertia is allocated proportionally to the capacity of the virtual inertia devices $\overline\mu$, in order to meet the specifications regardless of cost), market-based (Figure~\ref{fig:mechanism}), and the centralized \eqref{eq:Cent} mechanism. Each allocation is such that the performance requirements in \eqref{eq:sim} are met. The optimal allocations and the costs for the centralized and market-based mechanisms coincide. As the regulatory strategy does not factor for the cost-curves of the agents, it results in a higher overall cost.
\item Figure~\ref{fig: pcu} plots the average VCG-based payments (payment received per unit of virtual inertia) and average costs (cost per unit of virtual inertia) for each agent. Note that the agents $2$b, $4$e, $8$c, $12$b do not provide any virtual inertia due to higher per-unit cost of inertia. The average payment for each agent providing virtual inertia support and connected to the same node is identical. However, the cheapest agents are preferred-- this is reflected in payments larger than their costs (which is also their bid). Ultimately, the utility (the difference between payment and cost incurred) of each agent depends on the cost curves of all the agents that are co-located at the same bus.

\item {Another interesting observation is that the number of agents do not necessarily determine the extent of influence on pricing dynamics or market power, e.g., the agents $2$a, $2$c, $4$c, $12$a have the lowest cost of the service and yet they are not utilised to their capacity $\overline\mu$. This is due to the fact that system performance $\mathrm{Y}_D (m,{\pi})$ is both inertia and location dependent. This therefore also rules out the concept of a single-clearing price based on the cost and capacity.
}
 \end{enumerate}%
%%%%%%%%%%%%%%%%%%%%%%%%%%%%%%%%%%%%%%%%%%%%%%%%%%%%%%

\section{Conclusions}
\label{Section: Conclusions}
We motivated the need for virtual inertia markets and considered the problem of economic, incentivized procurement of virtual inertia in low-inertia power grids. Two schemes for virtual inertia procurement were introduced and analyzed via an objective embedding a robust performance metric-- accounting for grid stability and the cost of virtual inertia procurement. {In contrast to a regulatory approach, the proposed market-based decentralized mechanism while respecting performance constraints, adequately compensated the agents for their contribution. Such a decentralized approach also resulted in a virtual inertia allocation which maximized the social welfare of the centralized planning problem.} This mechanism was partially inspired by the ancillary service markets and the payments to agents were based on the VCG rule. This enforced truthful bidding by the agents, achieved incentive compatibility, and non-negative utilities. We finally presented a few illustrations for a three-region case study underscoring the benefits of such a mechanism, while highlighting the peculiarities such as the absence of a single-clearing price, due to the heavy location-dependence of inertia as a service.

With more renewable integration in the power grids, virtual inertia is poised for a greater role, and the issues of economical procurement and markets for this service will gain further prominence. This paper is an attempt in this direction and we believe that such decentralized auction mechanisms for virtual inertia procurement will be integrated within the structure of existing power markets. In addition, we wish to point out that the decentralised mechanism presented here is model-agnostic. Though we illustrate our approach through a stylistic swing equation-based model, it can be extended to more complex system dynamics as long as convexity of the objective is established. Open problems not considered here pertain to robust auction mechanisms which counteract shill-bidding and collusion among various agents.
\section*{Acknowledgments}
The authors wish to thank Maryam Kamgarpour and Joe Warrington for their comments on the problem setup.
%%%%%%%%%%%%%%%%%%%%%%%%%%%%%%%%%%%%%%%%%%%%%%%%%%%%

{\small
\renewcommand{\baselinestretch}{0.993}
\bibliographystyle{IEEEtran}
\bibliography{Pricing}
}

\section*{Appendix}
We present the proof sketch of Theorem 1 below.

\begin{proof}
In the following we use $\mu^\star$ as a shorthand for $\mu^\text{VCG}$, and $\hat{\mu}$ as a shorthand for $\mu^\text{VCG-k}$. We have from \eqref{eq:so_opt1}, 
\begin{equation}
\mu^\star(b)=\, \textup{arg}\,\, \underset{\mu \in \mathcal{M}}{\textup{min}}\quad \mathcal{B}(\mu, b), \label{eq:1}
\end{equation}
where, we recall that $\mu^\star(b)$=$\mu^\text{VCG}(b)$ is a function of the bids $b=(b_k, b_{-k})$. 

As before, let $\mu^\text{VCG-$k$}$ be the vector of optimal allocations of agents, when agent $k$ abstains from the auction. We have,
\begin{equation}
\hat{\mu}(b)=\, \textup{arg}\,\,\underset{\mu \in \mathcal{M}_{-k}} {\textup{min}} \quad\mathcal{B}(\mu,b),\label{eq:3}
\end{equation}
and $\hat{\mu}(b)$=$\mu^\text{VCG-$k$}(b)$ is a function of the bids $(b_{-k})$.

The individual payment to each agent $k$ as per the VCG payment rule \eqref{eq:VCGpay} is
\begin{equation*}
p^\text{VCG}_k=\Bigg\{\mathcal{B}(\hat{\mu}(b), b)\Bigg\} -\Bigg\{\mathcal{B}(\mu^\star(b), b) -b_k(\mu_k^\star(b))\Bigg\}.
\end{equation*}

Each agent maximizes individual utility with the local optimization problem for agent $k$ as in \eqref{eq:IndAg}. At the optimal allocation ($\mu^\text{VCG}, \, p^\text{VCG}$), with bids $(b_k,b_{-k})$, the utility of agent $k$ is
\begin{align}
&u_k(b_k, b_{-k})= {p^\text{VCG}_k(b_k, b_{-k})}-c_k(\mu^\text{VCG}_k(b_k, b_{-k}))\label{eq:UtilOpt1}\\
& =\mathcal{B}(\hat{\mu}(b),b)\underbrace{-\gamma\,{\Gamma}(m(\mu^\star(b)))-\sum_{j\neq k} b_j (\mu_j^\star(b))-c_k(\mu_k^\star(b))}_{\text{\normalsize{$=-\mathcal{B}(\mu^\star(b), (c_k, b_{-k}))$}}}\label{eq:UtilOpt2}.
\end{align}
We recall from \eqref{eq:Cent_d} that $m$ is a function of $\mu$. Hence, we can express $m(\mu^\star(b))$ as a function of $\mu^\star(b)$. Furthermore, we know from \eqref{eq:IndAg} that the agents desire to maximize individual utilities through an optimal bidding strategy $b_k$, preferably, independent of $b_{-k}$.

Note that the first term in \eqref{eq:UtilOpt2}, $\mathcal{B}(\hat{\mu}(b),b)$ is the objective evaluated at the optimizer obtained in \eqref{eq:3}. Via the constraint $\mu_k=0$ and bid characteristic $b_k(0)=0$, we conclude that the term $\mathcal{B}(\hat{\mu}(b),b)$ is independent of $(b_k,\mu_k)$. The utility $u_k$ in \eqref{eq:UtilOpt1} is therefore maximized when the second term $\mathcal{B}(\mu^\star(b), (c_k, b_{-k}))$ attains a minimum. Let $\mathcal{B}^\star(\mu^\star, (b_k^\dagger, b_{-k}))$ be the minimum of $\mathcal{B}(\mu^\star(b), (c_k, b_{-k}))$, i.e.,
\begin{equation*}
\label{eq:Opt_Agent_2}
\mathcal{B}^\star=\underset{b_k}{\textup{min}}\quad \gamma\,{\Gamma}(m(\mu^\star(b)))+\sum_{j\neq k} b_j (\mu_j^\star(b))+c_k(\mu_k^\star(b)).
\end{equation*}
From \eqref{eq:1}, note that
\begin{equation}
\label{eq:VCGp1}
\mathcal{B}(\mu^\star(b_k, b_{-k}), (b_k, b_{-k}))=\,\underset{\mu\in \mathcal{M}}{\textup{min}}\quad \mathcal{B}(\mu, (b_k, b_{-k})).
\end{equation}
Therefore, by changing arguments from $b_k$ to $c_k$ we get 
\begin{equation}
\label{eq:VCGp2}
\mu^\star(c_k, b_{-k})=\, \textup{arg}\,\underset{\mu\in \mathcal{M}}{\textup{min}}\quad \mathcal{B}(\mu, (c_k, b_{-k})),
\end{equation}
\begin{equation}
\label{eq:VCGp3}
\mathcal{B}(\mu^\star(c_k, b_{-k}), (c_k, b_{-k}))=\, \underset{\mu\in \mathcal{M}}{\textup{min}} \quad\mathcal{B}(\mu, (c_k, b_{-k})).
\end{equation}
From equations \eqref{eq:VCGp1}, \eqref{eq:VCGp2}, \eqref{eq:VCGp3}, and for the truthful bid $b_k(\mu_k)=c_k(\mu_k)$, the set $\mu^\star(b)=\mu^\star(b_k, b_{-k})$ is the minimizer of $\mathcal{B}(\mu^\star(b), (c_k, b_{-k}))$, with the minimum value of $\mathcal{B}^\star(\mu^\star, (b_k^\dagger, b_{-k}))=\mathcal{B}(\mu^\star(c_k, b_{-k}), (c_k, b_{-k}))$. 

The above argument applies to each agent $k$. Hence, bidding the true cost $b_k=c_k$ is a dominant strategy, and thus also a Nash equilibrium. Furthermore, as this bid $b_k=c_k$, is independent of the choice of the bids of other agents $b_{-k}$, such a mechanism is incentive-compatible.

The utility function of agent $k$ when all agents bid true costs, $u^\star_k(c_k, c_{-k})$, is
$$u^\star_k(c_k, c_{-k})=\mathcal{B}(\hat{\mu}(c),c)-\mathcal{B}({\mu^\star(c)}, c),$$ 
where $c=(c_k, c_{-k})$. As the first term $\mathcal{B}(\hat{\mu}(c),c)$ is evaluated over a smaller constraint set $\mathcal{M}_{-k}$, it is always larger than or equal to the second term $\mathcal{B}({\mu^\star(c)}, c)$. Consequently, it follows that bidding one's true cost also results in non-negative utilities $u_k^\star\geq0$ and thus also non-negative payments $p_k^\text{VCG}(c)\geq 0$ from \eqref{eq:IndAg}. As each agent bids the true cost, the problems \eqref{eq:so_opt1} and \eqref{eq:Cent} coincide. Hence, the optimal allocation $\mu^\text{VCG}(c)$ minimizes the social cost in the centralized problem \eqref{eq:Cent} and maximizes social welfare. This completes the proof.
\end{proof}

\end{document}